\def\ps@pprintTitle{
 \let\@evenfoot\@oddfoot}
\newtheorem{theorem}{\textbf{Theorem}}
\newtheorem{corollary}[theorem]{\textbf{Corollary}}
\newtheorem{lemma}{\textbf{Lemma}}
\newenvironment{proof}[1][Proof]{\noindent\textbf{#1.} }{\ \rule{0.5em}{0.5em}}
\begin{document}

\title{New Formulas for Semi-Primes. Testing, Counting and Identification of the $n^{th}$ and next Semi-Primes}

\author[label1]{Issam Kaddoura}
\ead{issam.kaddoura@liu.edu.lb}
\author[label2]{Samih Abdul-Nabi}
\ead{samih.abdulnabi@liu.edu.lb}
\author[label1]{Khadija Al-Akhrass}

\address[label1]{Department of Mathematics, school of arts and sciences}
\address[label2]{Department of computers and communications engineering, \\ Lebanese International University, Beirut, Lebanon}


\begin{abstract}

In this paper we give a new semiprimality test and we construct a new formula for $\pi ^{(2)}(N)$, the function that counts the number of
semiprimes not exceeding a given number $N$. We also present new formulas to identify the $n^{th}$ semiprime and the next semiprime
to a given number. 
The new formulas are based on the knowledge of the primes less than or equal to the cube roots 
of $N : P_{1}, \; P_{2}....P_{\pi \left( \sqrt[3]{N}\right) }\leq \sqrt[3]{N}$.

\end{abstract}
\maketitle
{\bf Keywords: prime, semiprime, $n^{th}$ semiprime, next semiprime}

\section{Introduction}

Securing data remains a concern for every individual and every organization on the globe. In telecommunication, cryptography is one of the 
studies that permits the secure transfer of information \cite{IEEE_standard} over the Internet. Prime numbers have special properties that 
make them of fundamental importance in cryptography. The core of the Internet security is based on protocols, such as SSL and TSL 
\cite{rescorla2001ssl} released in 1994 and persist as the basis for securing different aspects of today's Internet \cite{clark2013sok}.

The Rivest-Shamir-Adleman encryption method \cite{rivest1978method}, released in 1978, uses asymmetric keys for exchanging data. 
A secret key  $S_{k}$ and a public key $P_{k}$ are generated by the recipient with the following property: A message enciphered by $P_{k}$
can only be deciphered by $S_{k}$ and vice versa. The public key is publicly transmitted to the sender and used to encipher data that only the 
recipient can decipher. RSA is based on generating two large prime numbers, say $P$ and $Q$ and its security is enforced by the fact that 
albeit the fact that the product of these two primes $n = P \times Q $ is published, it is of enormous difficulty to factorize $n$.

A semiprime or (2 almost prime) or ( pq number) is a natural number that is a product of 2 primes not necessary distinct.
The semiprime is either a square of prime or square free. Also the square of any prime number is a semiprime number.

Mathematicians have been interested in many aspect of the semiprime numbers. In \cite{ishmukhametov2014distrubution} authors
derive a probabilistic function $g(y)$ for a number $y$ to be semiprime and an asymptotic formula for counting $g(y)$ when $y$ is very large.
In \cite{doss2013approximation} authors are interested in factorizing semiprimes and use an approximation to $\pi(n)$ the function that counts 
the prime numbers $\leq n$.

While mathematicians have achieved many important results concerning distribution of prime numbers. Many are interested 
in semiprimes properties as to counting prime and semiprime numbers not exceeding a given number. 
From \cite{weisstein2003semiprime,conway2008counting,goldston2009small}, the formula for $\pi ^{(2)}(N)$ that counts the semiprime 
numbers not exceeding $N$ is given by (\ref{eq:spc}).
 
\begin{equation}
\label{eq:spc}
\pi ^{(2)}(N)=\sum_{i=1}^{\pi \left( \sqrt{N}\right) }\left[ \pi \left( \frac{x}{p_{i}}\right) -i+1\right]
\end{equation}
\noindent This formula is based on the primes P$_{1}$,P$_{2},....,$P$_{\pi \left( \sqrt{N} \right) }\leq \sqrt{N}$ .

Our contribution is of several folds. First, we present a formula to test the semiprimality of a given integer, this formula is used to build a 
new function $\pi ^{(2)}(N)$ that counts the semiprimes not exceeding a given integer $N$ using only  P$_{1}$,P$_{2},....$P$_{\pi \left( \sqrt[3]{N}\right) }\leq \sqrt[3]{N}$.
Second, we present an explicit formula that identify the $n^{th}$ semiprime number. And finally we give a formula that finds the next 
semiprime to any given number.
\section{Semiprimality Test}
\label{sec:spt}

With the same complexity ${O}(\sqrt{x})$ as the Sieve of Eratosthenes to test a primality of a given number $x$, we employ the
Euclidean Algorithm and the fact that every prime number greater than 3 has the form $6k \pm  1$ and without previous knowledge about any prime,
we can test the primality of $x \geq  8$ using the following procedure :

Define the following functions

\begin{equation}
T_{0}(x)=\left\lfloor \frac{1}{2}\left( \left\lceil \frac{x}{2} -\left\lfloor \frac{x}{2}\right\rfloor \right\rceil +\left\lceil \frac{x}{3} -\left\lfloor \frac{x}{3}\right\rfloor \right\rceil \right) \right\rfloor
\end{equation}

\begin{equation}
T_{1}(x)=\left\lfloor \frac{1}{\left\lceil \frac{\sqrt{x}}{6} \right\rceil }\sum_{k=1}^{\left\lceil \frac{\sqrt{x}}{6}\right\rceil}\left\lceil \frac{x}{6k-1}-\left\lfloor \frac{x}{6k-1}\right\rfloor \right\rceil \right\rfloor
\end{equation}

\begin{equation}
T_{2}(x)=\left\lfloor \frac{1}{\left\lceil \frac{\sqrt{x}}{6} \right\rceil }\sum_{k=1}^{\left\lceil \frac{\sqrt{x}}{6}\right\rceil }\left\lceil \frac{x}{6k+1}-\left\lfloor \frac{x}{6k+1}\right\rfloor
\right\rceil \right\rfloor
\end{equation}

\begin{equation}
T(x)=\left\lfloor \frac{T_{0}+T_{1}+T_{2}}{3}\right\rfloor
\end{equation}

where $\left\lfloor x\right\rfloor $ and $\left\lceil x\right\rceil $ are the floor and the ceiling functions of the real number $x$ respectively.

We have the following theorem which is analog to that appeared in \cite{kaddoura2012formula} with slight modification and the details 
of the proof are exactly the same .

\begin{theorem}
\label{th:1}
Given any positive integer $x > 7$, then
\begin{enumerate}
   \item $x$ is prime if and only if  $T(x) =1$
	 \item  $x$ is composite if and only if $T(x) = 0$
   \item For $x >7$  
				\begin{equation}
				      \label{eq:1}
							\pi (x)=4+\sum_{j=1}^{\left\lceil  \frac{x-7}{6} \right\rceil}  T(6j+7)  +\sum_{j=1}^{\left\lceil \frac{x-5}{6} \right\rceil } T(6j+5) 
				\end{equation}
\end{enumerate}
counts the number of primes not exceeding $x$.
\end{theorem}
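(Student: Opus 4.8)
The plan is to reduce everything to the elementary fact that, for positive integers $x$ and $d$, the number $\lceil x/d-\lfloor x/d\rfloor\rceil$ --- the ceiling of the fractional part of $x/d$ --- equals $0$ if $d\mid x$ and $1$ otherwise. First I would apply this with $d=2,3$ to get $T_0(x)=1$ exactly when $\gcd(x,6)=1$ and $T_0(x)=0$ otherwise, since the inner sum of $T_0$ equals $2$ (and hence the outer floor equals $1$) only when neither $2$ nor $3$ divides $x$. Writing $m=\lceil\sqrt{x}/6\rceil$, which is $\ge 1$ for $x>7$, the same fact shows that $\sum_{k=1}^{m}\lceil x/(6k-1)-\lfloor x/(6k-1)\rfloor\rceil$ equals $m$ minus the number of $k\in\{1,\dots,m\}$ with $(6k-1)\mid x$; dividing by $m$ and flooring then gives $T_1(x)=1$ iff no integer $6k-1$ with $1\le k\le m$ divides $x$, and $T_1(x)=0$ otherwise, and similarly for $T_2$ with $6k+1$. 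Since $T_0,T_1,T_2\in\{0,1\}$, it follows that $T(x)=\lfloor(T_0+T_1+T_2)/3\rfloor$ equals $1$ precisely when $T_0=T_1=T_2=1$, and equals $0$ in every other case.

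With this dictionary, parts (1) and (2) come down to showing that for $x>7$ the statements ``$x$ is prime'' and ``$T(x)=1$'' are equivalent. If $x>7$ is prime then $\gcd(x,6)=1$, so $T_0(x)=1$; moreover each integer $6k\pm1$ with $1\le k\le m$ satisfies $1<6k\pm1\le 6m+1<\sqrt{x}+7\le x$, the last step using that a prime $>7$ is at least $11$, so none of those integers divides the prime $x$, forcing $T_1(x)=T_2(x)=1$ and hence $T(x)=1$. Conversely, let $x>7$ be composite. If $2\mid x$ or $3\mid x$ then $T_0(x)=0$ and $T(x)=0$. Otherwise $\gcd(x,6)=1$: pick a prime divisor $p\le\sqrt{x}$ of $x$; then $p\ge 5$, so $p=6k-1$ or $p=6k+1$ for some $k\ge 1$, and $p\le\sqrt{x}$ forces $k\le m$. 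Thus $p$ is one of the divisors tested inside $T_1$ or $T_2$, so $T_1(x)=0$ or $T_2(x)=0$, whence $T(x)=0$. Since every integer $>7$ is prime or composite, this settles both biconditionals.

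For part (3) I would split the primes $\le x$ into $2,3,5,7$ and the primes $p$ with $7<p\le x$. Every prime of the latter kind exceeds $3$, hence is $\equiv 1$ or $5\pmod 6$, and so can be written uniquely as $p=6j+7$ (when $p\equiv 1$) or $p=6j+5$ (when $p\equiv 5$) with $j\ge 1$. By parts (1)--(2), $T(6j+7)$ is the indicator that $6j+7$ is prime and $T(6j+5)$ the indicator that $6j+5$ is prime, so summing each indicator over exactly the $j$ with $6j+7\le x$, respectively $6j+5\le x$, counts the primes in $(7,x]$; adding $4$ for $\{2,3,5,7\}$ gives $\pi(x)$. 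Matching those two ranges of $j$ with the summation limits in (\ref{eq:1}) is then a routine computation.

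The step I expect to be the main obstacle is the bookkeeping with the truncated ranges. In the converse half of (1)--(2) one must be sure that a prime factor $p\le\sqrt{x}$ of a composite $x$ coprime to $6$ genuinely appears as one of the $6k\pm1$ with $k$ in the range $1\le k\le\lceil\sqrt{x}/6\rceil$ that actually occurs in $T_1,T_2$: if $p=6k+1$ then $6k<\sqrt{x}$, so $k<\sqrt{x}/6\le m$; if $p=6k-1$ then $k\le(\sqrt{x}+1)/6<\sqrt{x}/6+1\le m+1$, so again $k\le m$. In part (3) the parallel care is to choose the summation limits so that the progressions $6j+5$ and $6j+7$ sweep out every prime up to $x$ and no larger one. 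Everything else is the translation of the nested floors and ceilings into indicator functions, plus the residue structure of primes modulo $6$.
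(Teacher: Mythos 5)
Your analysis of $T$ and your proofs of parts (1) and (2) are correct and complete: reducing each $T_i$ to an indicator of trial division by $2,3$ and by the numbers $6k\pm1$ with $1\le k\le m=\lceil\sqrt{x}/6\rceil$, checking that for prime $x>7$ none of the tested divisors can equal $1$ or $x$ (via $6m+1<\sqrt{x}+7\le x$), and checking that a composite $x$ coprime to $6$ has a prime factor $6k\pm1\le\sqrt{x}$ with $k$ inside the tested range, is exactly what is needed. Note the paper gives no proof of this theorem at all --- it defers to \cite{kaddoura2012formula} --- so your write-up is filling a real gap rather than paralleling an argument in the text.

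The one genuine problem is in part (3), and it sits precisely in the step you dismissed as ``a routine computation.'' The number of $j\ge1$ with $6j+7\le x$ is $\lfloor(x-7)/6\rfloor$, not $\lceil(x-7)/6\rceil$, and likewise for $6j+5$. With the ceiling limits actually printed in (\ref{eq:1}), the sums include indices $j$ for which $6j+7>x$ or $6j+5>x$, and whenever such a value is prime the formula overcounts. Concretely, for $x=14$ the limits are $\lceil 7/6\rceil=2$ and $\lceil 9/6\rceil=2$, so the right-hand side is $4+T(13)+T(19)+T(11)+T(17)=8$, while $\pi(14)=6$; even $x=8$ gives $4+T(13)+T(11)=6\ne\pi(8)=4$. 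What your argument actually establishes is the corrected identity with floors in both upper limits (equivalently, summing over exactly those $j$ with $6j+7\le x$, resp.\ $6j+5\le x$). So you cannot ``match the ranges'' to the printed limits; you should either prove the floor version and say so, or flag that (\ref{eq:1}) as stated is false and needs the ceilings replaced by floors.
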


Now we proof the following Lemma:
\begin{lemma}
\label{l:1}
If $N$ is a positive integer with at least 3 factors, then there exist a prime $p$ such that:
\begin{equation*}
				p\leq \sqrt[3]{N} \mbox{ and $p$ divides $N$}
\end{equation*}
\end{lemma}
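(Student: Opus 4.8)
The plan is to read ``at least $3$ factors'' in the sense that is used consistently throughout the paper, namely that $N$ has at least three prime factors when these are counted with multiplicity (so $N$ is neither $1$, nor a prime, nor a semiprime). With that reading I would start from the prime factorization
\begin{equation*}
N=p_{1}p_{2}\cdots p_{k},\qquad k\geq 3,
\end{equation*}
where the (not necessarily distinct) prime factors are listed in nondecreasing order $p_{1}\leq p_{2}\leq\cdots\leq p_{k}$.

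The prime I would exhibit is $p:=p_{1}$, the least prime factor of $N$. Because $p_{1}$ is the smallest among the $p_{i}$ and because $k\geq 3$, I would bound
\begin{equation*}
p_{1}^{3}=p_{1}\cdot p_{1}\cdot p_{1}\leq p_{1}\cdot p_{2}\cdot p_{3}\leq p_{1}p_{2}\cdots p_{k}=N,
\end{equation*}
where the last inequality holds since any remaining factors $p_{4},\dots,p_{k}$ are each at least $1$. Taking cube roots yields $p\leq\sqrt[3]{N}$, and $p\mid N$ by construction, which is exactly the claim.

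I do not expect any genuine obstacle in the argument itself; the only point that needs care is fixing the meaning of ``factor,'' and it is worth flagging why the hypothesis cannot be relaxed to ``$N$ has at least three \emph{divisors}.'' Indeed, if $N=p^{2}$ for a prime $p$, then $N$ has exactly the three divisors $1,p,p^{2}$, yet its unique prime divisor satisfies $\sqrt[3]{N}=p^{2/3}<p$, so the conclusion would fail; thus the statement as phrased is sharp. For the intended application I would then invoke the contrapositive: if \emph{no} prime $p\leq\sqrt[3]{N}$ divides $N$, then $N$ has at most two prime factors counted with multiplicity, i.e.\ $N$ is $1$, a prime, or a semiprime. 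This dichotomy is precisely what will let us test semiprimality, and subsequently count semiprimes, using only the primes up to $\sqrt[3]{N}$.
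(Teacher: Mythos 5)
Your proof is correct and follows essentially the same route as the paper: write $N$ as a product of at least three nontrivial factors in nondecreasing order, observe that the smallest one cubed is at most $N$, and conclude that the least prime factor is at most $\sqrt[3]{N}$. The only cosmetic difference is that you work directly with the prime factorization, whereas the paper first writes $N=abc$ with $1<a\leq b\leq c$ and then extracts a prime divisor of $a$ via the fundamental theorem of arithmetic; your side remarks on the meaning of ``factor'' and the contrapositive are accurate and consistent with how the paper uses the lemma.
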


\begin{proof}
If $N$ has at least 3 factors then it can be represented as : $N=a.b.c$
with the assumption  $1<a\leq b\leq c$, we deduce that $N\geq a^{3}$ or $a\leq \sqrt[3]{N}$ .
By the fundamental theorem of arithmetic, $\exists $ a prime number $p$ such that $p$ divides $a$.
That means $\ p\leq a\leq \sqrt[3]{N},$ but $p$ divides $a$ and $a$ divides $N$, hence $p$ divides $N$ with the property $p\leq \sqrt[3]{N}.$
\end{proof}

Lemma \ref{l:1} tells that, if $N$ is not divisible by any prime $p\leq \sqrt[3]{N}$, then $N$ has at most 2 prime factors, i.e., $N$ is prime or semiprime .
 Using the proposed primality test defined by $T(x)$ we construct the semiprimality test as follows:

For $x \geq  8$, define the functions $K_{1}(x)$ and $K_{2}(x)$ as follows:

\begin{eqnarray}
K_{1}(x) &=&\left\lfloor \frac{1}{\pi (\left\lfloor \sqrt[3]{x}\right\rfloor )}\sum_{i=1}^{\pi \left (\left\lfloor \sqrt[3]{x}\right \rfloor \right  ) }\ \left\lceil 
\left\lceil \frac{x}{p_{i}}\right\rceil -\frac{x}{p_{i}}\right\rceil
\right\rfloor \\
K_{2}(x) &=&\left\lceil \frac{1}{\pi (\left\lfloor \sqrt[3]{x}\right\rfloor ) }\sum_{i=1}^{\pi \left ( \left\lfloor \sqrt[3]{x}\right\rfloor \right ) }\left\lfloor 
\frac{x}{p_{i}}-\left\lceil \frac{x}{p_{i}}\right\rceil +1\right\rfloor T\left(
\frac{x}{p_{i}} \right ) \right\rceil
\end{eqnarray}

where $\pi (x)$ is the classical prime counting function presented in (\ref{eq:1}), $T(x)$ is the same as in Theorem \ref{th:1}. Obviously $T(x)$ is independent of any previous knowledge of the prime numbers .

\begin{lemma}
\label{l:2}
If $K_{1}(x)=0$,  then $x$ is divisible by some prime $p_i \leq \sqrt[3]{x}$ .
\end{lemma}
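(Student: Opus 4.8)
The plan is to prove the contrapositive: if $x$ is divisible by no prime $p_i \leq \sqrt[3]{x}$, then $K_1(x) = 1$, so in particular $K_1(x) \neq 0$. The whole argument reduces to identifying precisely what the summand $\left\lceil \left\lceil \frac{x}{p_i} \right\rceil - \frac{x}{p_i} \right\rceil$ computes.

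First I would establish the elementary identity that, for a positive integer $x$ and a prime $p_i$, the quantity $\left\lceil \left\lceil \frac{x}{p_i} \right\rceil - \frac{x}{p_i} \right\rceil$ equals $0$ when $p_i$ divides $x$ and equals $1$ when $p_i$ does not divide $x$. Indeed, if $p_i$ divides $x$ then $x/p_i$ is an integer, so $\lceil x/p_i \rceil = x/p_i$ and the bracket is $\lceil 0 \rceil = 0$; if $p_i$ does not divide $x$ then $x/p_i$ is not an integer, so $0 < \lceil x/p_i \rceil - x/p_i < 1$ and the bracket is $1$. Summing this identity over $i = 1, \dots, \pi(\lfloor \sqrt[3]{x} \rfloor)$, the sum $\sum_{i} \left\lceil \left\lceil \frac{x}{p_i} \right\rceil - \frac{x}{p_i} \right\rceil$ equals the number of primes $p_i \leq \sqrt[3]{x}$ that do not divide $x$, an integer lying between $0$ and $\pi(\lfloor \sqrt[3]{x} \rfloor)$.

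Now assume, for the contrapositive, that no prime $p_i \leq \sqrt[3]{x}$ divides $x$; then every summand equals $1$ and the sum equals $\pi(\lfloor \sqrt[3]{x} \rfloor)$. Since $x \geq 8$ we have $\lfloor \sqrt[3]{x} \rfloor \geq 2$, so $\pi(\lfloor \sqrt[3]{x} \rfloor) \geq 1$ and the division in the definition of $K_1$ is legitimate; dividing, the argument of the outer floor is exactly $1$, hence $K_1(x) = \lfloor 1 \rfloor = 1 \neq 0$. This proves the contrapositive, and hence the lemma. I do not expect a genuine obstacle: the only points needing a little care are the ceiling identity above — specifically, checking that in the non-divisible case $\lceil x/p_i \rceil - x/p_i$ lies strictly between $0$ and $1$ so that its ceiling is exactly $1$ and not $0$ — and the remark that the normalizing denominator $\pi(\lfloor \sqrt[3]{x} \rfloor)$ is nonzero for $x \geq 8$. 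Equivalently, one may argue directly: $K_1(x) = 0$ forces the normalized sum to be strictly less than $1$, hence the sum to be strictly less than $\pi(\lfloor \sqrt[3]{x} \rfloor)$, hence at least one summand to vanish, which by the identity means some prime $p_i \leq \sqrt[3]{x}$ divides $x$.
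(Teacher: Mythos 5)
Your proof is correct and matches the paper's approach: both rest on the observation that the summand $\left\lceil \left\lceil \frac{x}{p_i} \right\rceil - \frac{x}{p_i} \right\rceil$ is $0$ exactly when $p_i$ divides $x$ and $1$ otherwise, so that $K_1(x)=0$ forces some summand to vanish. The paper states this in one line via the direct argument you give in your closing remark; your version merely fills in the routine details (the strict inequality in the non-divisible case and the nonvanishing of the denominator).
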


\begin{proof}
For $K_{1}(x)=0$ ,we have $\left\lceil \left\lceil \frac{x}{p_{i}} \right\rceil -\frac{x}{p_{i}}\right\rceil =0$ \ for some $p_{i}$, then $x$ is
divisible by $p_{i}$ for some $p_{i}$ $\leq \sqrt[3]{x}$.
\end{proof}

\begin{lemma}
\label{l:3}
If $K_{1}(x)=1$ ,then $x$ has at most 2 prime factors exceeding $\sqrt[3]{x}.$
\end{lemma}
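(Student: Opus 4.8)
The plan is to show first that the hypothesis $K_{1}(x)=1$ is equivalent to the statement ``no prime $p\le\sqrt[3]{x}$ divides $x$'', and then to invoke Lemma~\ref{l:1} to bound the number of prime factors.

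For the first step, I would observe that each summand in the definition of $K_{1}(x)$ is a $0$--$1$ indicator. Indeed, for every prime $p_{i}$ the quantity $\lceil x/p_{i}\rceil - x/p_{i}$ lies in the half-open interval $[0,1)$, so $\lceil\,\lceil x/p_{i}\rceil - x/p_{i}\,\rceil$ equals $0$ when $p_{i}\mid x$ and equals $1$ otherwise. Writing $m=\pi(\lfloor\sqrt[3]{x}\rfloor)$, which is at least $1$ since $x\ge 8$ forces $\lfloor\sqrt[3]{x}\rfloor\ge 2$, the inner sum is an integer between $0$ and $m$ that counts exactly the primes $p_{i}\le\sqrt[3]{x}$ which do \emph{not} divide $x$. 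Hence the argument of the outer floor equals $\frac{1}{m}$ times this count, so it is at most $1$, with equality if and only if every one of these indicators equals $1$; and if even a single prime $p_{i}\le\sqrt[3]{x}$ divides $x$, that argument is at most $(m-1)/m<1$, so the floor is $0$. Therefore $K_{1}(x)=1$ precisely when no prime $p\le\sqrt[3]{x}$ divides $x$.

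For the second step, assume $K_{1}(x)=1$. By what was just shown, $x$ is divisible by no prime $p\le\sqrt[3]{x}$; in particular every prime factor of $x$ exceeds $\sqrt[3]{x}$. If $x$ had at least $3$ prime factors counted with multiplicity, it could be written as $x=a\cdot b\cdot c$ with $1<a\le b\le c$, and Lemma~\ref{l:1} would then produce a prime $p\le\sqrt[3]{x}$ dividing $x$, contradicting the previous sentence. Hence $x$ has at most $2$ prime factors, and, as already noted, each of them exceeds $\sqrt[3]{x}$, which is the assertion of the lemma.

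The only real subtlety --- and the step I would be most careful about --- is the ``if and only if'' in the second paragraph: one must use that each summand is genuinely confined to $\{0,1\}$, so that the normalized sum cannot attain the value $1$ unless every term is $1$, and drops strictly below $1$ (hence floors to $0$) as soon as one term vanishes. Everything after that is a direct application of Lemma~\ref{l:1}.
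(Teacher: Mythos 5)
Your proposal is correct and follows essentially the same route as the paper: read $K_{1}(x)=1$ as saying every indicator $\left\lceil \left\lceil x/p_{i}\right\rceil - x/p_{i}\right\rceil$ equals $1$, hence no prime $p\leq \sqrt[3]{x}$ divides $x$, and then apply Lemma~\ref{l:1} (in contrapositive form) to conclude $x$ has at most two prime factors, all exceeding $\sqrt[3]{x}$. Your second paragraph merely makes explicit the $0$--$1$ nature of the summands and the ``equality forces all terms to be $1$'' step, which the paper asserts without elaboration.
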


\begin{proof}
If $K_{1}(x)=1$, then $\left\lceil \left\lceil \frac{x}{p_{i}}\right\rceil - \frac{x}{p_{i}}\right\rceil =1$ for all $p_{i}$ $\leq \sqrt[3]{x}$ therefore
by lemma \ref{l:1}, $x$ is not divisible by any prime $p_{i}$ $\leq \sqrt[3]{x},$ therefore $x$ has at most two prime factors exceeding $\sqrt[3]{x}.$
\end{proof}

\begin{lemma}
If $T(x)$ $=0$ and $K_{1}(x)$ $=1,$ then x is semiprime and $K_{2}(x)$ $=0.$
\end{lemma}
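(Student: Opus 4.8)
The plan is to harvest the three claims directly from the machinery already assembled. First I would apply Theorem~\ref{th:1}: the hypothesis $T(x)=0$ says that $x$ is composite, so $x$ has at least two prime factors counted with multiplicity.

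Second, I would distill from $K_{1}(x)=1$ the fact that no prime $p_{i}\le\sqrt[3]{x}$ divides $x$ --- this is exactly the computation already carried out inside the proof of Lemma~\ref{l:3}. For each $i$ the term $\bigl\lceil\lceil x/p_{i}\rceil-x/p_{i}\bigr\rceil$ equals $0$ when $p_{i}\mid x$ and $1$ otherwise, so the arithmetic mean of these terms is at most $1$, with equality precisely when $p_{i}\nmid x$ for every $i$ in range; hence $K_{1}(x)=1$ forces $p_{i}\nmid x$ for all $i\le\pi(\lfloor\sqrt[3]{x}\rfloor)$. Feeding this into the contrapositive of Lemma~\ref{l:1} (as already noted right after that lemma), $x$ cannot have three or more prime factors. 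Combined with the first step, $x$ has exactly two prime factors, i.e. $x$ is semiprime.

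Third, to obtain $K_{2}(x)=0$ I would observe that the weight
\[
w_{i}:=\left\lfloor \frac{x}{p_{i}}-\left\lceil \frac{x}{p_{i}}\right\rceil+1\right\rfloor
\]
multiplying $T(x/p_{i})$ in the definition of $K_{2}(x)$ is nothing but the indicator of $p_{i}\mid x$: if $p_{i}\mid x$ then $x/p_{i}$ is an integer and $w_{i}=\lfloor 0+1\rfloor=1$, whereas if $p_{i}\nmid x$ then $\lceil x/p_{i}\rceil-x/p_{i}\in(0,1)$, so the bracketed quantity lies in $(0,1)$ and $w_{i}=0$. By the divisibility fact of the second step, every prime $p_{i}\le\sqrt[3]{x}$ fails to divide $x$, hence $w_{i}=0$ for all $i$; the sum defining $K_{2}(x)$ is therefore $0$ and $K_{2}(x)=\lceil 0\rceil=0$. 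Note that the possibly non-integral arguments $x/p_{i}$ handed to $T$ never matter, since each $T(x/p_{i})$ is multiplied by the vanishing weight $w_{i}$.

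I do not expect a genuine obstacle: the argument is bookkeeping resting on earlier results. The only two points deserving a sentence of care are the equivalence ``$K_{1}(x)=1$ iff no prime $\le\sqrt[3]{x}$ divides $x$'', which I would state once and reuse, and the verification that $w_{i}$ is exactly the divisibility indicator; both reduce to short floor/ceiling estimates.
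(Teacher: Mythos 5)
Your proof is correct and follows essentially the same route as the paper's: use $T(x)=0$ to get compositeness, use $K_{1}(x)=1$ (via Lemma~\ref{l:3}, i.e.\ the contrapositive of Lemma~\ref{l:1}) to cap the number of prime factors at two, and then note that the weight $\left\lfloor \frac{x}{p_{i}}-\left\lceil \frac{x}{p_{i}}\right\rceil+1\right\rfloor$ vanishes for every $p_{i}\le\sqrt[3]{x}$, forcing $K_{2}(x)=0$. You merely spell out the floor/ceiling indicator computations that the paper leaves implicit, which is a welcome addition rather than a deviation.
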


\begin{proof}
If $K_{1}(x)=1$ , then $x$ has at most 2 prime factors but $T(x)=0$ which means that $x$ is composite,
hence $x$ has exactly two prime factors and both factors are greater than $\ \sqrt[3]{x}$ and 
$\left\lfloor \frac{x}{p_{i}}-\left\lceil \frac{x}{p_{i}}\right\rceil +1\right\rfloor =0$ for each prime $p_{i} \leq \sqrt[3]{x}$,
therefore $K_{2}(x)=0$.
\end{proof}

\begin{lemma}
If $T(x)=0$ and $K_{1}(x)=0$ , then $x$ is a semiprime number if and only if $K_{2}(x)=1$.
\end{lemma}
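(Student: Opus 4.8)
The plan is to peel away the floor/ceiling bookkeeping in $K_2(x)$, reducing it to a transparent indicator sum over the prime divisors of $x$ below $\sqrt[3]{x}$, and then read semiprimality off Theorem \ref{th:1}. First I would record the elementary fact that for any prime $p_i$ the real number $\frac{x}{p_i}-\left\lceil\frac{x}{p_i}\right\rceil$ lies in $(-1,0]$ and equals $0$ exactly when $p_i\mid x$; hence $\left\lfloor\frac{x}{p_i}-\left\lceil\frac{x}{p_i}\right\rceil+1\right\rfloor$ equals $1$ if $p_i\mid x$ and $0$ otherwise. Feeding this into the definition of $K_2(x)$ collapses the sum there into $\sum T(x/p_i)$, the sum running over the primes $p_i\le\sqrt[3]{x}$ that divide $x$. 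This is a sum of at most $\pi(\lfloor\sqrt[3]{x}\rfloor)$ terms, each $0$ or $1$, so after dividing by $\pi(\lfloor\sqrt[3]{x}\rfloor)$ (which is at least $1$ since $x\ge 8$) and taking the ceiling we get $K_2(x)\in\{0,1\}$, with $K_2(x)=1$ if and only if there is a prime $p_i\le\sqrt[3]{x}$ such that $p_i\mid x$ and $T(x/p_i)=1$. This normalization is the only real computation and it is routine.

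Next, the ($\Leftarrow$) direction. If $K_2(x)=1$, choose $p_i\le\sqrt[3]{x}$ with $p_i\mid x$ and $T(x/p_i)=1$, and put $q=x/p_i$. From $p_i^{3}\le x$ we get $q\ge p_i^{2}\ge 4$, and for $q>7$ Theorem \ref{th:1} converts $T(q)=1$ into ``$q$ is prime'', so $x=p_i q$ is a product of two primes, i.e. a semiprime. For the ($\Rightarrow$) direction, let $x=pq$ with $p\le q$ primes. The hypothesis $K_1(x)=0$ together with Lemma \ref{l:2} yields a prime $\le\sqrt[3]{x}$ dividing $x$; it must be $p$ or $q$, and it cannot be $q$, because $q\le\sqrt[3]{x}$ would force $x=pq\le x^{2/3}$; for the same reason $p$ is the \emph{only} prime divisor of $x$ below $\sqrt[3]{x}$. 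Hence the reduced sum is exactly $T(x/p)=T(q)$, and since $q$ is prime with $q=x/p\ge p^{2}\ge 4$, Theorem \ref{th:1} (for $q>7$) gives $T(q)=1$; therefore $K_2(x)=\lceil 1/\pi(\lfloor\sqrt[3]{x}\rfloor)\rceil=1$.

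The one place where the argument is not purely mechanical — the step I expect to be the real obstacle — is the behavior of $T$ on small arguments: Theorem \ref{th:1} characterizes $T(y)$ only for $y>7$, whereas the cofactor $q=x/p_i$ is only guaranteed to satisfy $q\ge 4$. Thus in both directions one must treat separately the finitely many configurations with $q\le 7$, namely $p_i=2$ and $q\in\{5,7\}$, i.e. $x\in\{10,14\}$. For the ($\Leftarrow$) direction a direct evaluation of $T(4),T(5),T(6),T(7)$ (all equal to $0$) shows that $T(q)=1$ cannot occur there, so nothing is lost; for the ($\Rightarrow$) direction these two values are the genuine edge cases, and I would either check them by hand or state the lemma with the mild proviso that every prime divisor $p_i\le\sqrt[3]{x}$ of $x$ has cofactor $x/p_i>7$. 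Apart from this, everything — the indicator identity, the uniqueness of the small prime factor of a semiprime, and the closing ceiling computation — is bookkeeping, much of it already done in the preceding lemmas.
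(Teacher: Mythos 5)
Your argument follows the same route as the paper's own proof: interpret $\left\lfloor \frac{x}{p_i}-\left\lceil \frac{x}{p_i}\right\rceil+1\right\rfloor$ as the indicator of $p_i\mid x$, observe that for a semiprime $x=pq$ with $K_1(x)=0$ exactly one prime $p\le\sqrt[3]{x}$ divides $x$, reduce $K_2(x)$ to the normalized ceiling of $T(q)$ for the prime cofactor $q=x/p$, and invoke Theorem~\ref{th:1} in both directions. The normalization step and the uniqueness of the small prime factor are handled correctly.

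The edge case you flag, however, is not a formality: it is a genuine defect that the paper's proof passes over silently when it asserts $T\left(\left\lceil \frac{pq}{p}\right\rceil\right)=1$ for the prime cofactor. Theorem~\ref{th:1} guarantees $T(q)=1$ for primes $q$ only when $q>7$, and direct computation gives $T(5)=\left\lfloor\frac{1+0+1}{3}\right\rfloor=0$ (the term $6\cdot 1-1=5$ divides $5$, so $T_1(5)=0$) and likewise $T(7)=0$ (since $6\cdot 1+1=7$ divides $7$, so $T_2(7)=0$). Hence $K_2(10)=\lceil T(5)\rceil=0$ and $K_2(14)=\lceil T(7)\rceil=0$, even though $10$ and $14$ are semiprimes satisfying $T(x)=0$ and $K_1(x)=0$; the forward implication of the lemma is therefore false at $x=10$ and $x=14$. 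So ``checking by hand'' does not rescue the statement: your proposed proviso (every prime divisor $p_i\le\sqrt[3]{x}$ of $x$ has cofactor $x/p_i>7$, which excludes exactly $x=10,14$), or a correction of $T$ on small arguments, is genuinely required, and the same two values propagate into the counting formula of Section~\ref{sec:spcf}. Your ($\Leftarrow$) direction is unaffected, as you note, since $T$ vanishes on $\{4,5,6,7\}$ and any witnessing cofactor with $T=1$ is automatically greater than $7$.
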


\begin{proof}
If $T(x)=0$ and $K_{1}(x)=0$ then $x$ divides a prime  $p \leq \sqrt[3]{N}$, but $x$ is semiprime that means 
$x =pq$ and $q$ is prime number hence for prime $p_{i}=p$ and $x=pq$ we have:
\begin{equation*}
\left\lfloor \frac{x}{p_{i}}-\left\lceil \frac{x}{p_{i}}\right\rceil
+1\right\rfloor T \left( \left\lceil \frac{x}{p_{i}} \right\rceil \right)=\left\lfloor 
\frac{pq}{p}-\left\lceil \frac{pq}{p}\right\rceil +1\right\rfloor
T\left( \left\lceil \frac{pq}{p}\right\rceil \right)=1
\end{equation*}
consequently $K_2(x) = 1$ because at least one of the terms is not zero.

conversely , if $K_{2}(x)=1$ then 
$\left\lfloor \frac{x}{p_{i}}-\left\lceil \frac{x}{p_{i}}\right\rceil +1\right\rfloor T \left( \left\lceil \frac{x}{p_{i}} \right\rceil \right)$ 
is not zero for some $i$ and then $x = p_{i}q$ and
$T \left( \left\lceil \frac{x}{p_{i}}\right\rceil \right)=1$
for some prime $p_{i}\leq \sqrt[3]{x}$ \ then $T \left( \left\lceil \frac{p_{i}q}{p_{i}} \right\rceil \right) =T(q)=1$ 
hence $q$ is a prime number and $x$ is a semiprime number .
\end{proof}

We are now in a position to prove the following theorem that characterize the semiprime numbers.

\begin{theorem}
\label{th,2}
( Semiprimality Test ): Given any positive integer  $x >7$, then $x$ is semiprime if and only if:
\begin{enumerate}
		\item $T(x)=0$ and $K_{1}(x)=1$ \\
		or 
		\item $T(x)=0$, $K_{1}(x)=0$ and $K_{2}(x)=1$
\end{enumerate}
\end{theorem}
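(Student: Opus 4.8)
The plan is to obtain the statement directly from the lemmas already established, organising the argument as a case analysis driven by the value of $K_1(x)$. The first observation I would record is that, by Theorem \ref{th:1}, for $x>7$ one has $T(x)=0$ exactly when $x$ is composite; hence every semiprime $x>7$ automatically satisfies $T(x)=0$, and this equality is common to both alternatives (1) and (2), so it never obstructs either one.

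For the ``only if'' direction I would start from a semiprime $x=pq$ with primes $p\leq q$. Having recorded $T(x)=0$, I would inspect $K_1(x)$: each summand $\left\lceil\left\lceil\frac{x}{p_i}\right\rceil-\frac{x}{p_i}\right\rceil$ equals $0$ when $p_i\mid x$ and $1$ otherwise, so the normalised sum lies in $[0,1]$ and the outer floor gives $K_1(x)=1$ precisely when no prime $p_i\leq\sqrt[3]{x}$ divides $x$, and $K_1(x)=0$ otherwise. If $p>\sqrt[3]{x}$, then neither prime factor of $x$ lies at or below $\sqrt[3]{x}$, so $K_1(x)=1$ and alternative (1) holds. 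If instead $p\leq\sqrt[3]{x}$ (equivalently $p\leq\left\lfloor\sqrt[3]{x}\right\rfloor$), then $p$ is among the $p_i$ and the corresponding summand vanishes, so $K_1(x)=0$; since $x$ is semiprime while $T(x)=0$ and $K_1(x)=0$, the lemma asserting that in this situation $x$ is semiprime if and only if $K_2(x)=1$ forces $K_2(x)=1$, which is alternative (2). I would also explicitly note the prime-square case $x=p^2$: since $p>p^{2/3}=\sqrt[3]{x}$ for every prime $p$, such $x$ always falls into the first case, so squares of primes are correctly captured by (1).

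The ``if'' direction is then immediate from the two unlabelled lemmas preceding the theorem: if $T(x)=0$ and $K_1(x)=1$, the lemma ``$T(x)=0$ and $K_1(x)=1$ imply $x$ semiprime'' applies directly; if $T(x)=0$, $K_1(x)=0$ and $K_2(x)=1$, the lemma of the previous paragraph applies in its other direction. Combining the two directions yields the claimed equivalence.

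I do not anticipate a genuine difficulty in the theorem itself, since the analytic content resides in the lemmas. The only place that needs care is the bookkeeping around $K_1$: verifying that its summands are genuinely $0/1$-valued, that the normalising factor $1/\pi\!\left(\left\lfloor\sqrt[3]{x}\right\rfloor\right)$ together with the outer floor collapses the sum to the indicator ``some prime $\leq\sqrt[3]{x}$ divides $x$'', and that the hypotheses required to invoke the $K_2$-lemma ($T(x)=0$ and $K_1(x)=0$) are exactly what the second case supplies. Once this is in place, the split on $K_1(x)\in\{0,1\}$ closes the argument.
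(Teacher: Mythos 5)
Your proposal is correct and follows essentially the same route as the paper: a case split on whether the smaller prime factor of $x=pq$ lies above or at most $\sqrt[3]{x}$, with $T(x)=0$ supplied by compositeness and the preceding lemmas handling the two branches. If anything, you are more explicit than the paper on the converse (which the paper dismisses with ``the same arguments'') and on the prime-square case $x=p^{2}$, both of which are welcome additions.
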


\begin{proof}
If $x$ is semiprime then $x =pq$ where $p$ and $q$ are two primes . 
If $p$ and $q$ both greater than $\sqrt[3]{x}$ then 
$T(x)=0$ 
and 
\begin{equation*}
K_{1}(pq)= \left\lfloor \frac{1}{\pi \left ( \left\lfloor \sqrt[3]{pq}\right\rfloor \right ) } 
\sum_{i=1}^{\pi \left ( \left\lfloor \sqrt[3]{pq}\right\rfloor \right ) }\ \left\lceil \left\lceil \frac{pq}{p_{i}} \right\rceil -\frac{pq}{p_{i}}\right\rceil \right\rfloor =
\left\lfloor \frac{\pi \left ( \left\lfloor \sqrt[3]{pq}\right\rfloor \right ) }{\pi \left ( \left\lfloor \sqrt[3]{pq} \right\rfloor \right ) } \right\rfloor=1
\end{equation*}

if $x=p\textprime q\textprime$ where $p\textprime$ and $q\textprime$ are two primes such that $p\textprime \leq \left\lfloor \sqrt[3]{x}\right\rfloor$ 
and $q\textprime > \left\lfloor \sqrt[3]{x}\right\rfloor$ then $T(x)=0$ and

\begin{equation*}
K_{1}(p\textprime q\textprime)=\left\lfloor \frac{1}{\pi \left ( \left\lfloor \sqrt[3]{p\textprime q\textprime} \right\rfloor \right ) }
\sum_{i=1}^{\pi \left ( \left\lfloor \sqrt[3]{p\textprime q\textprime }\right\rfloor \right ) }\ \left\lceil \left\lceil 
\frac{p\textprime q\textprime}{p\textprime} \right\rceil -\frac{p\textprime q\textprime}{p\textprime}\right\rceil \right\rfloor =0
\end{equation*}

because $\left\lceil \left\lceil \frac{p\textprime q\textprime}{p\textprime} \right\rceil -\frac{\textprime q\textprime}{p\textprime}\right\rceil =0$ 
and 

\begin{equation*}
K_{2}(p\textprime q\textprime)=\left\lceil \frac{1}{\pi \left ( \left\lfloor \sqrt[3]{p\textprime q\textprime} \right\rfloor \right ) }
\sum_{i=1}^{\pi \left ( \left\lfloor \sqrt[3]{p\textprime q\textprime} \right\rfloor \right ) }\left\lfloor
\frac{p\textprime q\textprime}{p\textprime}-\left\lceil \frac{p\textprime q\textprime}{p\textprime} \right\rceil +1\right\rfloor T(\frac{p\textprime q\textprime}{p\textprime})\right\rceil =1
\end{equation*}

because $\left\lfloor \frac{p\textprime q\textprime}{p\textprime}-\left\lceil 
\frac{p\textprime q\textprime}{p\textprime} \right\rceil +1\right\rfloor T(\frac{p\textprime q\textprime}{p\textprime})=
\left\lfloor q\textprime -q\textprime +1\right\rfloor T(q\textprime)=1.$

The converse can be proved by the same arguments.
\end{proof}

\begin{corollary}
A positive integer x $>7$ is semiprime if and only if $K_{1}(x)+K_{2}(x)-T(x)=1.$
\end{corollary}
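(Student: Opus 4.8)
The plan is to read the corollary off Theorem~\ref{th,2} once we know that, for $x>7$, each of $T(x)$, $K_1(x)$ and $K_2(x)$ lies in $\{0,1\}$. So the first step I would take is to record these ranges: $T(x)\in\{0,1\}$ by Theorem~\ref{th:1}; $K_1(x)\in\{0,1\}$ because it is the floor of an average of the numbers $\bigl\lceil\lceil x/p_i\rceil-x/p_i\bigr\rceil$, each of which equals $1$ when $p_i\nmid x$ and $0$ when $p_i\mid x$ (so the average lies in $[0,1]$, hitting $1$ exactly when no $p_i\le\sqrt[3]{x}$ divides $x$); and $K_2(x)\in\{0,1\}$ because each summand $\bigl\lfloor x/p_i-\lceil x/p_i\rceil+1\bigr\rfloor\,T(\cdot)$ is $0$ or $1$ --- its floor factor being $1$ iff $p_i\mid x$ --- so $K_2$ is the ceiling of an average lying in $[0,1]$. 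I would also note that $\lfloor\sqrt[3]{x}\rfloor\ge 2$ for $x>7$, hence $\pi(\lfloor\sqrt[3]{x}\rfloor)\ge 1$ and the denominators in $K_1,K_2$ are nonzero.

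Next I would prove the forward implication. Assume $x$ is semiprime; by Theorem~\ref{th,2} one of its two clauses holds. If the first holds, then $T(x)=0$ and $K_1(x)=1$, and the lemma asserting that ``$T(x)=0$ and $K_1(x)=1$ imply $x$ semiprime and $K_2(x)=0$'' gives $K_2(x)=0$, so $K_1(x)+K_2(x)-T(x)=1+0-0=1$. If the second holds, the clause itself records $T(x)=0$, $K_1(x)=0$ and $K_2(x)=1$, so $K_1(x)+K_2(x)-T(x)=0+1-0=1$. In both cases the claimed identity is satisfied.

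For the converse I would argue by enumeration. Suppose $K_1(x)+K_2(x)-T(x)=1$; since all three quantities lie in $\{0,1\}$, the only triples $(K_1,K_2,T)$ compatible with this are $(1,0,0)$, $(0,1,0)$ and $(1,1,1)$. The first two are exactly the two clauses of Theorem~\ref{th,2}, hence each forces $x$ to be semiprime. So the crux is to exclude $(1,1,1)$: if $T(x)=1$ then $x$ is prime by Theorem~\ref{th:1}, so the only prime dividing $x$ is $x$ itself, which is $>\sqrt[3]{x}$; thus no $p_i\le\sqrt[3]{x}$ divides $x$, every floor factor $\bigl\lfloor x/p_i-\lceil x/p_i\rceil+1\bigr\rfloor$ in the defining sum of $K_2(x)$ vanishes, and $K_2(x)=0$, contradicting $K_2(x)=1$. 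Hence $(1,1,1)$ cannot occur, and $x$ must be semiprime.

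The proof is therefore essentially bookkeeping on top of Theorem~\ref{th,2}, and the one place I would be careful is the range analysis in the first paragraph, since the enumeration in the converse collapses if, say, $K_2$ could exceed $1$ or the denominators could vanish. Everything past that is a short finite case check with no remaining obstacle.
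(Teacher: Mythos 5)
Your proof is correct and follows the same route the paper intends: its own proof is the one-line ``direct consequence of the previous theorem and lemmas,'' and your write-up simply supplies the bookkeeping (the $\{0,1\}$ ranges of $T$, $K_1$, $K_2$ and the exclusion of the triple $(K_1,K_2,T)=(1,1,1)$, which the paper records implicitly as Case~1 of its case list in Section~\ref{sec:spcf}). No substantive difference in approach.
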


\begin{proof}
A direct consequence of the previous theorem and lemmas.
\end{proof}

\section{Semiprime Counting Function}
\label{sec:spcf}

Notice that the triple $(T(x), K_{1}(x), K_{2}(x))$ have only the following 4 possible cases only:

\begin{description}
	\item[Case 1:] \ \ $(T(x),K_{1}(x),K_{2}(x))=(1,1,0)$ indicates that $x$ is prime number.
	\item[Case 2:] \ \  $(T(x),K_{1}(x),K_{2}(x))=(0,1,0)$ indicates that $x$ is semiprime in the form $x=pq$ where $p$ and $q$ are primes 
	such that $\left\lfloor \sqrt[3]{x}\right\rfloor < p \leq \left\lfloor \sqrt[2]{x}\right\rfloor $ and 
	$q \geq \left\lfloor \sqrt[2]{x}\right\rfloor$.
	\item[Case 3:] \ \ $(T(x),K_{1}(x),K_{2}(x))=(0,0,1)$ indicates that $x$ is semiprime in the form $x=pq$ where $p$ and $q$ are primes 
	such that $p\leq\left\lfloor \sqrt[3]{x}\right\rfloor $ and $q=\frac{x}{p}\geq \frac{x}{\sqrt[3]{x}} \geq \left\lfloor \sqrt[3]{x^{2}}\right\rfloor$.
	\item[Case 4:] \ \ $(T(x),K_{1}(x),K_{2}(x))=(0,0,0)$ indicates that $x$ has at least 3 prime factors .
\end{description}

Using the previous observations, lemmas as well as Theorem \ref{th,2} and corollary, we prove the following theorem that includes a function that 
counts all semiprimes not exceeding a given number N .

\begin{theorem}
For $N \geq 8$ then 
\begin{equation}
\pi ^{(2)}(N)=2+\sum_{x=8}^{N}(K_{1}(x)+K_{2}(x)-T(x))
\end{equation}
is a function that counts all semiprimes not exceeding $N$.
\end{theorem}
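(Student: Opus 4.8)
The plan is to reduce everything to the \emph{Corollary} already proved, which asserts that an integer $x>7$ is semiprime if and only if $K_1(x)+K_2(x)-T(x)=1$. First I would record that the three quantities $T(x)$, $K_1(x)$, $K_2(x)$ are simultaneously well defined for every $x\geq 8$: there $\lfloor\sqrt[3]{x}\rfloor\geq 2$, so $\pi(\lfloor\sqrt[3]{x}\rfloor)\geq 1$ and no division by zero occurs in $K_1$ or $K_2$, while $T(x)$ is defined for all $x>7$ via Theorem~\ref{th:1}. This is precisely why the summation must start at $x=8$.

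Next I would show that the summand $K_1(x)+K_2(x)-T(x)$ is an indicator function of the semiprimes on the range $x\geq 8$, i.e. it equals $1$ when $x$ is semiprime and $0$ otherwise. The case analysis preceding the statement shows that the triple $(T(x),K_1(x),K_2(x))$ can only be one of $(1,1,0)$, $(0,1,0)$, $(0,0,1)$, $(0,0,0)$, corresponding respectively to $x$ prime, $x$ semiprime with both prime factors exceeding $\sqrt[3]{x}$, $x$ semiprime with one prime factor at most $\sqrt[3]{x}$, and $x$ having at least three prime factors. Evaluating $K_1+K_2-T$ on these four triples gives $0$, $1$, $1$, $0$ in turn; hence the summand is $1$ exactly on the semiprimes and $0$ on every other integer $\geq 8$. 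Alternatively this drops out at once from the Corollary together with the fact that those four triples are the only possibilities, so the expression can never take a value other than $0$ or $1$.

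With the summand identified as the indicator of the semiprimes, $\sum_{x=8}^{N}\bigl(K_1(x)+K_2(x)-T(x)\bigr)$ counts exactly the semiprimes $x$ with $8\leq x\leq N$. Finally I would account for the semiprimes not exceeding $7$: the only ones are $4=2\cdot 2$ and $6=2\cdot 3$, so there are precisely two of them, and adding the constant $2$ produces the full count of semiprimes not exceeding $N$, namely $\pi^{(2)}(N)$.

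I do not anticipate a real obstacle here; the only point that needs care is confirming that the four listed triples are genuinely exhaustive — that $(K_1(x),K_2(x))=(1,1)$ is impossible and that $T(x)=1$ forces $(T(x),K_1(x),K_2(x))=(1,1,0)$. Both follow from Lemmas~\ref{l:2} and \ref{l:3} together with the definition of $K_2$: when $K_1(x)=1$, each factor $\left\lfloor \frac{x}{p_i}-\left\lceil \frac{x}{p_i}\right\rceil+1\right\rfloor$ vanishes, forcing $K_2(x)=0$. Once exhaustiveness is secured, the remainder of the argument is pure bookkeeping.
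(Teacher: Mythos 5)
Your proposal is correct and follows essentially the same route the paper intends: the four-case analysis and the Corollary identify $K_1(x)+K_2(x)-T(x)$ as the indicator function of semiprimality for $x\geq 8$, and the constant $2$ accounts for the semiprimes $4$ and $6$ below the starting index. In fact the paper gives no explicit proof for this theorem, only a pointer to those same ingredients, so your write-up (including the check that the four triples are exhaustive and that $(K_1,K_2)=(1,1)$ cannot occur) is if anything more complete than the original.
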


\section{$N^{th}$ Semiprime Formula}

The first few semiprimes in ascending order are $sp_{1}=4, \ sp_{2}=6, \ sp_{3}=9, \ sp_{4}=10, \ sp_{5}=14, \ sp_{6}=15, \  sp_{7}=21, \ etc$

We define the function $G(n,x)=\left\lfloor \frac{2n}{n+x+1}\right\rfloor $ where $n = 1, 2, 3... $ and  $x = 0, 1, 2, 3...$

clearly  
\begin{equation*}
G(n,x)=\left\lfloor \frac{2n}{n+x+1}\right\rfloor =\left\{ 
\begin{array}{cc}
1 & x<n \\ 
0 & x\geq n%
\end{array}%
\right. 
\end{equation*}

knowing that the bound of the $n^{th}$ prime is $P_{n}\leq 2n\log n$ \cite{robin1983estimation} , we can say that \\
the $n^{th}$  semiprime $sp_{n} \leq 2$ $P_{n}\leq 4n\log n$

\begin{theorem}
For  $x \geq  8$ and  $n > 2$, $sp_{n}$ the $n^{th}$ semiprime is given by the formula
\end{theorem}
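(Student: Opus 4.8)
The plan is to use the standard device for inverting a counting function: if an increasing integer sequence has counting function $\Pi$, then its $n$th term equals the number of integers strictly below that term, and this count can be produced by summing the indicator of ``$\Pi(x)<n$''. Here the sequence is the semiprimes, $\Pi=\pi^{(2)}$, and the indicator of $\pi^{(2)}(x)<n$ is exactly $G\big(n,\pi^{(2)}(x)\big)$, because the closed form already established for $G$ gives $G(n,y)=1$ when $y<n$ and $G(n,y)=0$ when $y\ge n$. The finite range of summation is supplied by the cited estimate $sp_n\le 2P_n\le 4n\log n$; write $U_n=\lfloor 4n\log n\rfloor$ for this bound.

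First I would recall from the preceding theorem that for every integer $x\ge 8$ the quantity $\pi^{(2)}(x)=2+\sum_{t=8}^{x}\big(K_1(t)+K_2(t)-T(t)\big)$ is precisely the number of semiprimes not exceeding $x$. Two properties are needed: $\pi^{(2)}$ is non-decreasing in $x$, and it increases by exactly $1$ as $x$ passes each semiprime (each semiprime is counted once), so that $\pi^{(2)}(x)<n$ holds precisely for the integers $x$ with $x<sp_n$. Combining this with the formula for $G$ yields, for $x\ge 8$ and $n>2$,
$$G\big(n,\pi^{(2)}(x)\big)=1\ \Longleftrightarrow\ \pi^{(2)}(x)<n\ \Longleftrightarrow\ x<sp_n .$$

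Next I would evaluate the sum. Since $n>2$ forces $sp_n\ge sp_3=9$, the integers $x$ in the admissible range $\{8,9,\dots\}$ that satisfy $x<sp_n$ are exactly $8,9,\dots,sp_n-1$, a block of $sp_n-8$ integers. The upper limit is harmless: since $sp_n\le 4n\log n$ and $sp_n$ is an integer we get $sp_n\le U_n$, so every term with $sp_n\le x\le U_n$ vanishes while every term with $8\le x\le sp_n-1$ equals $1$, and $n>2$ makes $U_n\ge 8$ so the range is non-empty. Hence $\sum_{x=8}^{U_n}G\big(n,\pi^{(2)}(x)\big)=sp_n-8$, and therefore
$$sp_n=8+\sum_{x=8}^{\lfloor 4n\log n\rfloor}G\big(n,\pi^{(2)}(x)\big),$$
the additive $8$ simply compensating for beginning the summation at $x=8$ rather than at $x=0$.

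The step I expect to require the most care is the endpoint bookkeeping: rigorously justifying that $\pi^{(2)}$ jumps by exactly $1$ at each semiprime, so that the chain of equivalences above is tight, and checking the lower end, namely that for $n>2$ no semiprime $\le 7$ is lost by starting the sum at $x=8$ and that $x=8$ itself contributes a $1$ (indeed $\pi^{(2)}(8)=2<n$). The verification of the upper limit through $sp_n\le 2P_n\le 4n\log n$ is immediate, and what remains is only the evaluation of a sum of $0$'s and $1$'s.
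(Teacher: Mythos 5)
Your proposal is correct and follows essentially the same route as the paper: interpret $G\bigl(n,\pi^{(2)}(x)\bigr)$ as the indicator of $x<sp_n$, so the sum over $8\le x\le\lfloor 4n\ln n\rfloor$ contributes exactly $sp_n-8$ ones followed by zeros. Your treatment is in fact a bit more careful than the paper's at the endpoints (verifying $sp_n\le\lfloor 4n\ln n\rfloor$ and the role of $n>2$), but the underlying argument is identical.
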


\begin{equation*}
sp_{n}=8+\sum_{x=8}^{\left\lfloor 4n\ln n \right\rfloor }\left\lfloor \frac{2n%
}{n+1+\pi ^{(2)}(x)}\right\rfloor =8+\sum_{x=8}^{\left\lfloor 4n\ln n
\right\rfloor }\left\lfloor \frac{2n}{n+3+\sum \limits_{m=8}^{x} \left 
(K_{1}(m)+K_{2}(m) - T(m) \right )}
\right\rfloor 
\end{equation*}

The formula in full is given by :

\begin{equation*}
\resizebox{.9\hsize}{!}
{$
sp_{n}=8+\sum \limits_{x=8}^{\left\lfloor 4n\ln n\right\rfloor }\left\lfloor \frac{2n%
}{n+3+\sum \limits_{m=8}^{x} \left (\left\lfloor \frac{1}{\pi (\left\lfloor \sqrt[3]{m}%
\right\rfloor )}\sum \limits_{i=1}^{\pi (\left\lfloor \sqrt[3]{m}\right\rfloor )}\
\left\lceil \left\lceil \frac{m}{p_{i}}\right\rceil -\frac{m}{p_{i}}%
\right\rceil \right\rfloor +\left\lceil \frac{1}{\pi (\left\lfloor \sqrt[3]{m%
}\right\rfloor )}\sum \limits_{i=1}^{\pi (\left\lfloor \sqrt[3]{m}\right\rfloor
)}\left\lfloor \frac{m}{p_{i}}-\left\lceil \frac{m}{p_{i}}\right\rceil
+1\right\rfloor T(\frac{m}{p_{i}})\right\rceil -T(m) \right)}\right\rfloor
$}
\end{equation*}

where $T(m)$ is given by

\begin{dmath*}
T(m)=\left\lfloor \frac{T_{0}(m)+T_{1}(m)+T_{2}(m)}{3}\right\rfloor
=\left\lfloor \frac{1}{3} \left ( \left\lfloor \frac{1}{2}\left( \left\lceil \frac{m%
}{2}-\left\lfloor \frac{m}{2}\right\rfloor \right\rceil +\left\lceil \frac{m%
}{3}-\left\lfloor \frac{m}{3}\right\rfloor \right\rceil \right)
\right\rfloor +\left\lfloor \frac{1}{\left\lceil \frac{\sqrt{m}}{6}%
\right\rceil }\sum_{k=1}^{\left\lceil \frac{\sqrt{m}}{6}\right\rceil
}\left\lceil \frac{m}{6k-1}-\left\lfloor \frac{m}{6k-1}\right\rfloor
\right\rceil \right\rfloor +\left\lfloor \frac{1}{\left\lceil \frac{\sqrt{m}%
}{6}\right\rceil }\sum_{k=1}^{\left\lceil \frac{\sqrt{m}}{6}\right\rceil
}\left\lceil \frac{m}{6k+1}-\left\lfloor \frac{m}{6k+1}\right\rfloor
\right\rceil \right\rfloor \right ) \right\rfloor
\end{dmath*}

\begin{proof}
For the $n^{th}$ semiprime $sp_{n}$, $\pi ^{(2)}(sp_{n})=n$ and for $x< sp_{i}$,  $pi^{(2)}(x) < pi^{(2)}(sp_i) = i \\ \forall \; i = 1 , 2 , 3 , ...., n$. 

Using the properties of the function $G(n,x)=\left\lfloor \frac{2n}{n+x+1%
}\right\rfloor =\left\{ 
\begin{array}{cc}
1 & x<n \\ 
0 & x\geq n%
\end{array}%
\right. $
\end{proof}

we compute

\begin{dmath*}
8+\sum_{x=8}^{\left\lfloor 4n\ln \right\rfloor }\left\lfloor \frac{2n}{%
n+1+\pi ^{(2)}(x)}\right\rfloor  =8+\sum_{x=8}^{\left\lfloor 4n\ln
\right\rfloor }G(n,\pi ^{(2)}(x))
=8 + G(n,\pi ^{(2)}(8))+G(n,\pi ^{(2)}(9))+G(n,\pi^{(2)}(10))+...+G(n,\pi ^{(2)}(P_{n-1}))+.... \\
+G(n,\pi ^{(2)}(P_{n-1}+1))+........G(n,\pi ^{(2)}(P_{n})+G(n,\pi^{(2)}(P_{n}+1)+...\\
=8+1+1+1+...1+0+0+0+...=sp_{n}
\end{dmath*}

where the last 1 in the summation is the value of $G(n,\pi ^{(2)}(sp_{n-1}))$
and then followed by $G(n,\pi ^{(2)}(sp_{n})=G(n,n)=0$ followed by zeros for
the rest terms of the summation, hence  
\begin{equation*}
sp_{n}=8+\sum_{x=8}^{\left\lfloor 4n\ln n\right\rfloor }G(n,\pi
^{2}(x))=8+\sum_{x=8}^{\left\lfloor 4n\ln n\right\rfloor }\left\lfloor \frac{%
2n}{n+1+\pi ^{(2)}(x)}\right\rfloor .
\end{equation*}

As an example, computing the $5^{th}$ semiprime number gives $sp_{5}=8+1+1+1+1+1+1 = 14$ as shown in Table \ref{table:5th}.
\begin{table}[h!]
\centering
\begin{tabular}{|r | r|} 
 \hline
$\pi^2(8)=2  $&$  G(5 ,\pi^2(8)) =  1  $    \\
$\pi^2(9)=3  $&$  G(5 ,\pi^2(9)) =  1   $   \\
$\pi^2(10)=4$&$  G(5 ,\pi^2(10)) =  1$    \\
$\pi^2(11)=4$&$ G(5 ,\pi^2(11)) =  1 $   \\
$\pi^2(12)=4$&$ G(5 ,\pi^2(12)) =  1$    \\
$\pi^2(13)=4$&$ G(5 ,\pi^2(13)) =  1$    \\
$\pi^2(14)=5$&$ G(5 ,\pi^2(14)) =  0$   \\\hline
\end{tabular}
\caption{Computing the $5^{th}$ semiprime}
\label{table:5th}
\end{table}

\section{Next Semiprine}

In our previous work \cite{kaddoura2012formula}, we introduced a formula that finds the next prime to a given number. 
In this section, we use an enhancement formula to find the next prime to a given number and we introduce a formula to compute the 
next semiprime to any given number. 


%
%
%
%
%
%

Recall that the integer $x\geq 8$ is a semiprime number if and only if $%
K_{1}(x)+K_{2}(x)-T(x)=1$ and if $x$ is not semiprime then $\
K_{1}(x)+K_{2}(x)-T(x)=0.$

Now we introduce an algorithm that computes the next semiprime to any given
positive integer $N$.

\begin{theorem}
If $N$ is any positive integer greater than 8 then the next semiprime to $N$
is given by: 
\begin{equation*}
NextSP(N)=N+1+\sum_{i=1}^{N}\left(
\prod\limits_{x=N+1}^{x=N+i} \left (1+T(x)-K_{1}(x)-K_{2}(x) \right)\right) 
\end{equation*}

where $T(x),K_{1}(x),K_{2}(x)$ are the functions defined in Section \ref{sec:spt}
\end{theorem}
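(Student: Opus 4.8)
The plan is to read the product in the formula as a run of indicator functions. By the corollary proved above, for every integer $x>7$ we have $K_{1}(x)+K_{2}(x)-T(x)=1$ when $x$ is semiprime and $K_{1}(x)+K_{2}(x)-T(x)=0$ otherwise (the ``otherwise'' cases being exactly Case~1 and Case~4 of the case analysis in Section~\ref{sec:spcf}). Hence the factor $1+T(x)-K_{1}(x)-K_{2}(x)$ equals $0$ precisely when $x$ is semiprime and $1$ precisely when $x$ is not. Since $N>8$, every index $x$ occurring in the product satisfies $x\ge N+1\ge 10>7$, so this dichotomy applies throughout. Consequently the inner product $\prod_{x=N+1}^{N+i}\bigl(1+T(x)-K_{1}(x)-K_{2}(x)\bigr)$ equals $1$ if the block $\{N+1,\dots,N+i\}$ contains no semiprime, and equals $0$ as soon as that block contains at least one semiprime. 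I would state this as a short lemma and then do the bookkeeping.

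Next I would let $d\ge 1$ be the least positive integer for which $N+d$ is semiprime, so that $NextSP(N)=N+d$ by definition of the next semiprime. For each $i$ with $1\le i\le d-1$ the set $\{N+1,\dots,N+i\}$ contains no semiprime, so the corresponding product is $1$; for each $i$ with $i\ge d$ the set contains $N+d$, so the product is $0$. Therefore, provided the summation range actually reaches $i=d$, i.e.\ provided $d\le N$, the outer sum collapses to $\sum_{i=1}^{d-1}1+\sum_{i=d}^{N}0=d-1$. Substituting into the formula gives $NextSP(N)=N+1+(d-1)=N+d$, which is the claimed value.

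The only step that requires a genuine argument, rather than bookkeeping, is the bound $d\le N$, that is, the fact that the next semiprime after $N$ does not exceed $2N$; this is what legitimizes truncating the outer sum at $N$. I would deduce it from Bertrand's postulate: for $N\ge 2$ there is a prime $p$ with $N/2<p\le N$ (treating $N$ even and $N$ odd separately in the elementary way), and then $2p$ is a semiprime with $N<2p\le 2N$, so $d\le 2p-N\le N$. With $d\le N$ established, the computation of the previous paragraph goes through verbatim and the proof is complete. This density estimate is the main obstacle in the sense that it is the only place where the specific upper limit $N$ of the summation is used; the rest is a direct consequence of the semiprimality characterization from Section~\ref{sec:spt}.
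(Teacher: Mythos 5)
Your proof is correct and follows essentially the same route as the paper: both read $1+T(x)-K_{1}(x)-K_{2}(x)$ as the indicator of ``$x$ is not semiprime'' via the corollary, observe that the inner product is $1$ exactly when the block $\{N+1,\dots,N+i\}$ contains no semiprime, and collapse the outer sum to $NextSP(N)-N-1$. The one substantive point where you go beyond the paper is the bound $d\le N$ (equivalently $NextSP(N)\le 2N$), which you derive from Bertrand's postulate by taking a prime $p$ with $N/2<p\le N$ and noting that $2p$ is a semiprime in $(N,2N]$. The paper's proof silently truncates the sum at $i=N$ without justifying that the next semiprime is reached within that range, so your argument actually closes a small gap in the published proof rather than merely reproducing it; everything else is the same bookkeeping.
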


\begin{proof}
We compute the summation:

\begin{dmath*}
\sum_{i=1}^{N}\left(
\prod\limits_{x=N+1}^{x=N+i}(1+T(x)-K_{1}(x)-K_{2}(x))\right)  \\
=\sum_{i=1}^{NextSP(N)-N-1}\left(
\prod\limits_{x=N+1}^{x=N+i}(1+T(x)-K_{1}(x)-K_{2}(x))\right)
+\sum_{i=NextSP(N)-N}^{N}\left(
\prod\limits_{x=N+1}^{x=N+i}(1+T(x)-K_{1}(x)-K_{2}(x))\right)  \\
=\sum_{i=1}^{NextP(N)-N-1}(1)+\sum_{i=NextP(N)-N}^{N}(0)=NextSP(N)-N-1
\end{dmath*}

hence 
\begin{equation*}
NextSP(N)=N+1+\sum_{i=1}^{N}\left(
\prod\limits_{x=N+1}^{x=N+i}(1+T(x)-K_{1}(x)-K_{2}(x))\right) 
\end{equation*}
\end{proof}

\begin{table}[ht!]
\centering
\begin{tabular}{|r r r|} 
 \hline
$x$ & $\pi^2(x)$ & Time in seconds \\ 
 \hline
\hline
10										& 4                 		&  0.00 \\
100									& 34									&  0.01 \\
1000								&	299								&  0.1  \\
10000							& 2625							&  3.0  \\	
100000      		& 23378       		&  50\\ 
1000000    		& 210035     		& 1091\\
10000000   		& 1904324   		&  22333\\
100000000 		& 17427258 		& 508840\\
\hline
\end{tabular}
\caption{Testing on $\pi^{(2)}(x)$}
\label{table:pie2(n)}
\end{table}

\section{Results}

We implemented the proposed functions using MATLAB and complete the testing on  an Intel Core i7-6700K with 8M cache and a clock speed of 4.0GHz. 						
Table \ref{table:pie2(n)}  shows the results related to $\pi^2(x)$ for some selected values of $x$. 

We have also computed few $n^{th}$ semiprimes as shown in Table \ref{table:nthsp}.

\begin{table}[ht!]
\centering
\begin{tabular}{|r r r|} 
 \hline
$n$ & $sp_n$ & Time in seconds \\ 
 \hline
\hline
100			            & 314       &  0.07\\ 
200									&	669      &  0.24 \\
300                  & 1003        & 0.49             \\
400                  &  1355       &   0.86           \\
500                  &    1735     &   1.22           \\
600                  &   2098      &  1.89            \\
700                  & 2474       &   2.39          \\
800                  &  2866       &  3.40           \\
900                  &  3202       &   3.78           \\
1000               & 3595     &  4.91\\
5000               &  19643  &   105.72        \\
10000     				&  40882  &  579.01\\
\hline
\end{tabular}
\caption{Testing on $n^{th}$ semiprimes}
\label{table:nthsp}
\end{table}

And finally we show the next semiprimes to some selected integers in Table \ref{table:nextsp}.

\begin{table}[ht!]
\centering
\begin{tabular}{|r r r|} 
 \hline
$n$ & $NextSP(n)$ & Time in seconds \\ 
 \hline
\hline
100			            & 106       &  0.01\\ 
200									&	 201     &  0.02 \\
300                  & 301        & 0.04             \\
400                  &  403      &   0.07           \\
500                  &    501     &   0.09           \\
1000               &   1003     &  0.31            \\
5000                  & 5001      &   5.92         \\
10000               & 10001    &  22.38\\
\hline
\end{tabular}
\caption{Testing on $NextSP(n)$ semiprimes}
\label{table:nextsp}
\end{table}

\section{Conclusion}
In this work, we presented new formulas for semiprimes. First, $\pi^{(2)}(n)$ that counts the number of semiprimes not exceeding a given number $n$.
Our proposed formula requires knowing only the primes that are less or equal $\sqrt[3]{n}$ while existing 
formulas require  at least knowing the primes that are less or equal $\sqrt[2]{n}$.
We also present a new formulas to identify the $n^{th}$ semiprime and finally, a new formula that gives the next semiprime to any integer. 

\section{References}
 \bibliographystyle{IEEEtran}
 \bibliography{IEEEabrv,paper}
 
  \end{document}